\newcommand{\R}{\mathds{R}}
\newcommand{\f}{\rightarrow}                  
\newcommand{\C}{\mathds{C}}            
\newcommand{\de}{\partial}
\newtheorem{theor}{Theorem}
\newtheorem{lem}[theor]{Lemma}
\begin{document}

\title{Global symplectic coordinates on gradient K\"ahler--Ricci solitons}
\author[A. Loi, M. Zedda]{Andrea Loi, Michela Zedda}
\address{Dipartimento di Matematica e Informatica, Universit\`{a} di Cagliari,
Via Ospedale 72, 09124 Cagliari, Italy}
\email{loi@unica.it; michela.zedda@gmail.com  }
\thanks{
The first author was (partially) supported by ESF within the program ÒContact and Symplectic TopologyÓ;
the second author was  supported by  RAS
through a grant financed with the ``Sardinia PO FSE 2007-2013'' funds and 
provided according to the L.R. $7/2007$.}
\date{}
\subjclass[2000]{53D05;  53C55.} %Kaehler and hermitian geoemtry; differentiable maps
\keywords{K\"{a}hler metrics; symplectic coordinates; Darboux theorem; gradient K\"ahler--Ricci solitons}

\begin{abstract}
A  classical  result of D. McDuff \cite{mc} asserts that a simply--connected complete K\"ahler manifold $(M,g,\omega)$ with non positive sectional curvature admits global symplectic coordinates through a symplectomorphism $\Psi\!: M\f\R^{2n}$ (where $n$ is the complex dimension of $M$), satisfying the following property (proved by E. Ciriza in \cite{cr2}): the image
$\Psi (T)$ of any complex totally geodesic submanifold $T\subset M$ through the point $p$ such that $\Psi(p)=0$, is a complex linear subspace of $\C^n\simeq\R^{2n}$. The aim of this paper is to exhibit, for all positive integers $n$, examples of $n$-dimensional  complete K\"ahler manifolds with non-negative sectional curvature globally symplectomorphic to $\R^{2n}$ through a symplectomorphism satisfying Ciriza's property. 
\end{abstract}

\maketitle

\section{Introduction}
D. McDuff \cite{mc} (see also \cite{alcu}) proved a global version of  Darboux theorem
for  $n$-dimensional complete and simply--connected K\"ahler manifolds
with  nonpositive sectional curvature. She  shows that there exists a diffeomorphism $\Psi
:M\rightarrow \R^{2n}=\C^n$ satisfying
and $\Psi^*(\omega_{0})=\omega$, where
$\omega_0=\sum_{j=1}^ndx_j\wedge dy_j$ is the standard symplectic
form on $\R^{2n}$. The interest for these kind of questions
comes, for example, after Gromov's discovery \cite{gr} of the
existence of exotic symplectic structures on $\R^{2n}$.
E.  Ciriza \cite{cr2} (see also \cite{cr1} and \cite{cr3})
proves that the image $\Psi(T)$ of any complete complex and
totally geodesic  submanifold $T$ of  $M$ passing through the point $p$ such that $\Psi(p)=0$, is a
complex linear subspace of $\C^n$. A global symplectomorphism satisfying this property has been constructed by the first author and A. J. Di Scala \cite{DiScalaLoi08} (see also \cite{DiScalaLoiRoos}) for Hermitian symmetric spaces of noncompact type and by the authors of the present paper for the Calabi's inhomogeneous K\"ahler--Einstein metric on tubular domains (cfr. \cite{Calihom}). 
 It is then  natural and interesting   to investigate the existence of positively curved complete K\"ahler manifolds globally symplectomorphic to $\R^{2n}$ through a symplectomorphic satisfying the above Ciriza's property.
In this paper we construct explicit global symplectic coordinates for the positively curved  complete  gradient K\"ahler--Ricci solitons built by H. D. Cao in \cite{Cao}.  Moreover, we exhibit, for all positive integres $n$, an example of   gradient 
K\"ahler--Ricci solitons (the product of $n$ copies of the Cigar soliton)  where Ciriza's property holds true. Our results are summarized in the following two theorems (see next section for details and terminology).
\begin{theor}\label{soliton}
A gradient K\"ahler--Ricci soliton $(\C^n,\omega_{RS})$ is globally simplectomorphic to $(\R^{2n},\omega_0)$.
\end{theor}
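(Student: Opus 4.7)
The plan is to exploit the $U(n)$-invariance of Cao's gradient K\"ahler--Ricci soliton and exhibit the symplectomorphism as a radial map. From Cao's construction the K\"ahler form has a global radial potential
$$
\omega_{RS}=\frac{i}{2}\partial\bar\partial F(|z|^2)
$$
for some smooth $F\colon[0,\infty)\f\R$, and a direct expansion yields
$$
\omega_{RS}=\frac{i}{2}\Bigl[F'(|z|^2)\sum_{j} dz_j\wedge d\bar z_j+F''(|z|^2)\Bigl(\sum_{j}\bar z_j\,dz_j\Bigr)\wedge\Bigl(\sum_{k} z_k\,d\bar z_k\Bigr)\Bigr].
$$
Smoothness at the origin and positive-definiteness of $\omega_{RS}$ translate into $F'(0)>0$ and $\bigl(rF'(r)\bigr)'>0$ for every $r\geq 0$.

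The candidate symplectomorphism is
$$
\Psi\colon\C^n\f\C^n,\qquad \Psi(z)=\sqrt{F'(|z|^2)}\,z,
$$
which is smooth on all of $\C^n$ because $F'(0)>0$. A direct computation of $\sum_j d\Psi_j\wedge d\bar\Psi_j$, in which the $d|z|^2\wedge d|z|^2$ contribution vanishes by anti-commutativity and the remaining cross-terms combine by means of the identity $2\phi\phi'=(\phi^2)'$ applied to $\phi=\sqrt{F'}$, shows that $\Psi^*\omega_0$ matches precisely the coordinate expression displayed above for $\omega_{RS}$. The pullback identity $\Psi^*\omega_0=\omega_{RS}$ follows.

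To conclude that $\Psi$ is a diffeomorphism, $U(n)$-equivariance reduces the question to verifying that the smooth map $G\colon[0,\infty)\f[0,\infty)$, $G(t)=t\sqrt{F'(t^2)}$, is a bijection with smooth inverse at the origin: strict monotonicity of $G$ is precisely the positivity condition $\bigl(rF'(r)\bigr)'>0$, while $G'(0)=\sqrt{F'(0)}>0$ gives smoothness of $G^{-1}$ near $0$. The main obstacle lies in the remaining point, namely extracting from the explicit ODE defining Cao's soliton profile, together with completeness of $\omega_{RS}$, the unboundedness $rF'(r)\f\infty$ as $r\f\infty$, which is what yields surjectivity of $G$ and hence of $\Psi$. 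Once these analytic facts about Cao's potential are in place, the formula $\Psi(z)=\sqrt{F'(|z|^2)}\,z$ exhibits the symplectomorphism in closed form.
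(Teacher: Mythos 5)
Your proposal is correct and follows essentially the same route as the paper: your map $\Psi(z)=\sqrt{F'(|z|^2)}\,z$ is exactly the symplectomorphism $z\mapsto\sqrt{u'(\log\|z\|^2)/\|z\|^2}\,(z_1,\dots,z_n)$ constructed there via Lemma \ref{fabio}, with the same pullback computation and with bijectivity reduced to positivity plus unboundedness of the radial profile (the paper phrases the latter as properness of $\Psi$). The one point you leave open as ``the main obstacle,'' namely $rF'(r)=u'(\log r)\to+\infty$, is not actually an obstacle: it is immediate from Cao's asymptotic condition $\lim_{t\to+\infty}u'(t)/t=n$, which the paper records as (\ref{limitcond}).
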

\begin{theor}\label{cigar}
Let  $(\C^n,\omega_{C,n})$ be the the product of $n$ copies of the Cigar soliton.
Then there exists a simplectomorphism $\Psi_{C,n}:(\C^n,\omega_{C,n})\rightarrow (\R^{2n},\omega_0)$, with $\Psi_{C,n}(0)=0$, taking complete complex totally geodesic submanifolds through the origin  to complex linear subspaces of $\C^n\simeq\R^{2n}$.
\end{theor}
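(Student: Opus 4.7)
The natural plan is to reduce to the one-dimensional case and then take products. First, one constructs an explicit symplectomorphism $\psi\colon(\C,\omega_C)\to(\R^2,\omega_0)$ for the one-dimensional Cigar by exploiting its rotational symmetry. Writing $z=re^{i\theta}$ one has (up to a normalizing constant) $\omega_C = \frac{r\,dr\wedge d\theta}{1+r^2}$, and the radial ansatz $\psi(re^{i\theta}):=\rho(r)\,e^{i\theta}$ is symplectic iff $\rho\rho' = r/(1+r^2)$, giving $\rho(r)=\sqrt{\ln(1+r^2)}$. The resulting $\psi$ is a smooth diffeomorphism $\C\to\C$ fixing $0$; then $\Psi_{C,n}:=\psi\times\cdots\times\psi$ is a symplectomorphism $(\C^n,\omega_{C,n})\to(\R^{2n},\omega_0)$ with $\Psi_{C,n}(0)=0$.

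The key structural property is that $\psi$ has the form $\psi(z)=f(|z|)\,z$ with $f(r)=\sqrt{\ln(1+r^2)/r^2}$ smooth and positive at $r=0$. Hence $\psi(e^{i\alpha}z)=e^{i\alpha}\psi(z)$ and $\psi$ setwise preserves every complex line through the origin. Consequently, for any $v\in\C^n$ whose nonzero components share a common modulus, $\Psi_{C,n}$ maps the complex line $\C\cdot v\subseteq\C^n$ bijectively onto itself.

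To verify Ciriza's property I classify the complete complex totally geodesic submanifolds of $(\C^n,\omega_{C,n})$ through $0$. Since the product metric is geodesic-componentwise, and a single Cigar geodesic from $0$ has the form $t\mapsto e^{i\theta}\sinh(t)$, geodesics of the product emanating from $0$ are $t\mapsto(\sinh(|v_j|t)\,v_j/|v_j|)_{j=1}^n$. Direct analysis of the geodesic equation in the ambient metric shows that a complex line $\C\cdot v\subseteq\C^n$ is totally geodesic precisely when the nonzero components of $v$ have a common modulus. More generally, a complete complex totally geodesic submanifold $N$ through $0$ is determined by a partition $S_1\sqcup\cdots\sqcup S_k$ of a subset $S\subseteq\{1,\ldots,n\}$ together with unit complex numbers $(u_{\ell,j})_{j\in S_\ell}$ (one block per $\ell$, defined up to a common phase), via
\[
N=\{(z_j)_{j=1}^n\,:\ z_j=0\text{ if }j\notin S,\ \ z_j=u_{\ell,j}\,w_\ell\text{ if }j\in S_\ell\},
\]
the free parameters being $w_1,\ldots,w_k\in\C$. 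Applying $\Psi_{C,n}$ and using the block-radial behavior of $\psi$, one finds that $\Psi_{C,n}(N)$ is the complex linear span of the vectors $\sum_{j\in S_\ell} u_{\ell,j}\,e_j$, which is a complex linear subspace of $\C^n\simeq\R^{2n}$.

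The main obstacle is the classification step: ruling out exotic complete complex totally geodesic submanifolds, for instance complex $2$-planes spanning coordinate directions with mismatched moduli. This reduces to the vanishing of the ambient second fundamental form on a candidate complex subspace $V\subseteq T_0\C^n$; combined with the product structure of the curvature tensor along radial geodesics, this forces the block-twisted-diagonal form above. Once that is settled, the verification that $\Psi_{C,n}(N)$ is a complex linear subspace is the elementary computation just sketched.
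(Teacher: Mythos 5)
Your construction of the symplectomorphism is correct and is essentially the paper's map: the paper obtains $\Psi_{C,n}(z)=(\psi_1(z_1)z_1,\dots,\psi_n(z_n)z_n)$ with $\psi_j=\sqrt{\log(1+|z_j|^2)/|z_j|^2}$ from a general lemma on rotation-invariant potentials (Lemma \ref{fabio}), while you rederive the same map factorwise from the polar-coordinate ODE $\rho\rho'=r/(1+r^2)$; for a product of cigars this is a perfectly adequate and more elementary route. Your description of the complete complex totally geodesic submanifolds through the origin (block-twisted diagonals with unit-modulus weights) and the final computation showing that $\Psi_{C,n}$ carries each such submanifold onto the complex span of the vectors $\sum_{j\in S_\ell}u_{\ell,j}e_j$ also agree with what the paper proves and uses.

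The genuine gap is the classification itself, which you explicitly defer (``the main obstacle is the classification step'') and then only gesture at. Two separate things are missing. First, the statement that a complex line $\C\cdot v$ is totally geodesic iff the nonzero $|v_j|$ coincide is asserted via ``direct analysis of the geodesic equation,'' but the argument you sketch (geodesics emanating from $0$ must stay in the line) only tests total geodesy at the origin; total geodesy is a condition at every point, and you need either the vanishing of the second fundamental form everywhere or an argument that a complete connected totally geodesic submanifold equals $\exp_0$ of its tangent space at $0$ and that this forces the constraint. Second, and more seriously, you must rule out one-dimensional complete complex totally geodesic submanifolds that are not complex lines, i.e.\ arbitrary holomorphic curves $z\mapsto(f_1(z),\dots,f_n(z))$ through $0$. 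This is exactly what the paper's Lemma \ref{totsub} does: for $n=2$ it equates the intrinsic curvature of the curve with the ambient sectional curvature (using the Gauss equation for totally geodesic submanifolds), reduces the resulting PDE to $A(f_1,f_2)=0$, and solves it to conclude that either some $f_j\equiv 0$ or $f_1=\alpha f_2$ with $|\alpha|=1$; the general case then follows by projecting to pairs of factors. Without this computation (or an equivalent second-fundamental-form argument carried out in full), the claimed list of totally geodesic submanifolds --- and hence Ciriza's property --- is not established. Everything after the classification in your proposal is fine, but the classification is the substantive content of the theorem's proof and cannot be left as a sketch.
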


The paper  consists of two other sections containing respectively the basic material on gradient 
K\"ahler--Ricci solitons and the proofs of the main results.

%The paper is organized as follows. In the next section we recall the basic definitions of Cao's K\"ahler--Ricci solitons and we construct global symplectic coordinates for this case. The third and last section is dedicated to the product of Cigar solitons.....

\section{Gradient  K\"ahler--Ricci solitons}
We recall here what we need about the gradient K\"ahler--Ricci solitons described by H-D. Cao in \cite{Cao} (to whom we refer for references and further details). Let $g_{RS}$ be the K\"ahler metric on $\C^n$ generated by the radial K\"ahler potential $\Phi(z,\bar z)=u(t)$, where for all $t\in(-\infty,+\infty)$, $u$ is a smooth function of $t=\log(||z||^2)$ and as $t\f -\infty$ it has an expansion: 
\begin{equation}\label{zerocond}
u(t)=a_0+a_1e^t+a_2e^{2t}+\dots, \qquad a_1=1.
\end{equation}
Denote by $\omega_{RS}=\frac i2 \de\bar\de\Phi$ the K\"ahler form associated to $g_{RS}$. If $u$ satisfies the equation:
\begin{equation}%\label{diff}
(u')^{n-1}u''e^{u'}=e^{nt},\nonumber
\end{equation}
%which leads to:
%\begin{equation}\label{nondiff}
%\sum_{k=0}^{n-1}(-1)^{n-k-1}\frac{n!}{k!}(u')^ke^u=e^{nt}+(-1)^{n-1}n!.\nonumber
%\end{equation}
then the conditions:
\begin{equation}\label{greaterthanzero}
u'(t)>0,\qquad u''(t)>0,\qquad\forall\, t\in(-\infty,+\infty),
\end{equation}
\begin{equation}\label{limitcond}
\lim_{t\f+\infty}\frac{u'(t)}{t}=n,\qquad \lim_{t\f+\infty}u''(t)=n.
\end{equation}
are fullfilled and  $(\C^n,\omega_{RS})$ is a gradient K\"ahler--Ricci soliton. The metric $g_{RS}$ is complete and positively curved and for $n=1$ one recovers the Cigar metric on $\C$ whose associated K\"ahler form reads:
$$\omega_C=\frac{dz\wedge d\bar z}{1+|z|^2},$$
which was introduced by Hamilton in \cite{Hamilton} as first example of K\"ahler--Ricci soliton on non-compact manifolds. Observe that a K\"ahler potential for $\omega_C$ is given by (see also \cite{Suzuki}):
\begin{equation}
\Phi_C=\int_0^{|z|}\frac{\log(1+s^2)}{s}ds.\nonumber
\end{equation}
Furthermore, in this case the Riemannian curvature reads:
\begin{equation}\label{Rcomp}
R=\frac{1}{(1+|z|^2)^{3}}.
\end{equation}
It is interesting observing that the K\"ahler metric $\omega_{C,n}$ on $\C^n=\frac i2\de\bar\de\Phi_{C,n}$ defined as product of $n$ copies of Cigar metric $\omega_C$, satisfies $\Phi_{C,n}=\Phi_C\oplus\dots\oplus\Phi_{C}$ and it is still a complete and positively curved (i.e. with non-negative sectional curvature) gradient K\"ahler--Ricci soliton, namely it satisfies (\ref{zerocond}), (\ref{greaterthanzero}) and (\ref{limitcond}) above.  %as it follows easily by the fact that $(g_C)_{j\bar k}=\left( \frac{\delta_{jk}}{1+|z_j|^2} \right)$ is a diagonal matrix whose $(j,j)$'s entry, for all $j=1,\dots, n$, depends only on the module of the $j$th variable.
In particular its Riemannian tensor satisfies $R_{i\bar j k\bar l}=0$ whenever one of the indexes is different from the others and by (\ref{Rcomp}) it is easy to see that the nonvanishing components are given by:
\begin{equation}\label{Rjjjj}
  R_{j\bar jj\bar j}=\frac{1}{(1+|z_j|^2)^3}.
 \end{equation}
%We denote by $\Phi_{C,n}=\Phi_C\oplus\dots\oplus\Phi_{C}$ the K\"ahler potential of $\omega_{C,n}$

\section{Proof of the main results}

In \cite{LoiZuddas08} the first author of the present paper, jointly with F. Zuddas, proved the following result on the existence of a symplectomorphism between a rotation invariant K\"ahler manifold of complex dimension $n$ and $(\R^{2n},\omega_0)$. For the readers convenience, we summarize here that result and the proof in the case when the manifold is $\C^n$. This will be the main ingredient in the proof of our  main results.
\begin{lem}\label{fabio}
Let $\omega_\Phi = \frac{i}{2}
\partial \bar{\partial} \Phi$ be a rotation invariant 
K\"ahler form
on $\C^n$
 i.e. the K\"ahler potential
 only depends on $|z_j|^2$, $j=1, \dots ,n$. \footnote {Notice that the rotation invariant condition on the potential $\Phi$ is more general then the radial one
which requires $\Phi$ depending only on  $|z_1|^2+\cdots +|z_n|^2$.} If
\begin{equation}\label{cond0}
\frac{\partial \Phi}{\partial |z_k|^2} \geq 0, \ \  k=1, \dots ,n.
\end{equation}
 then the map:
 $$\Psi: (M, \omega_{\Phi}) \rightarrow ({\C}^n, \omega_0),\quad z=(z_1,\dots,z_n)\mapsto(\psi_1(z)z_1,\dots, \psi_n(z)z_n),$$
 where $$\psi_j=\sqrt{\frac{\partial \Phi}{\partial |z_j|^2}},\quad  j=1,\dots, n,$$
is a  symplectic immersion. If in addition:
 \begin{equation}\label{gencondb}
 \lim_{z \rightarrow +\infty} \sum_{j=1}^n
\frac{\partial \Phi}{\partial |z_j|^2} |z_j|^2 = + \infty,
\end{equation}
then $\Psi$ is a global symplectomorphism.
\end{lem}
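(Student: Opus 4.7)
The plan is to work in polar coordinates $z_j=r_je^{i\theta_j}$ and to set $u_j=|z_j|^2=r_j^2$, exploiting the fact that $\Phi$ is a function of the $u_j$ alone. In these coordinates both $\omega_\Phi$ and $\Psi^*\omega_0$ turn out to admit the same ``action--angle'' form, which yields the symplectic statement at once; a separate topological argument then promotes $\Psi$ to a global diffeomorphism under~(\ref{gencondb}).

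The key step is to express $\omega_\Phi=\frac{i}{2}\partial\bar\partial\Phi$ in the coordinates $(u_j,\theta_j)$. Starting from $\partial\Phi=\sum_j\Phi_{u_j}\bar z_j\,dz_j$ and expanding, the purely ``horizontal'' contributions $du_j\wedge du_k$ and $d\theta_j\wedge d\theta_k$ cancel by the symmetry of the Hessian $\Phi_{u_ju_k}$ against the antisymmetry of the wedge, while the mixed terms collapse using the identity $\sum_j u_l\Phi_{u_j u_l}\,du_j+\Phi_{u_l}\,du_l=d(u_l\Phi_{u_l})$ to give
\begin{equation*}
\omega_\Phi=\frac{1}{2}\sum_{j=1}^n d\bigl(u_j\Phi_{u_j}\bigr)\wedge d\theta_j.
\end{equation*}
On the target side, since $\Psi$ preserves angles and sends $u_j$ to $\psi_j^2 u_j=u_j\Phi_{u_j}$, writing $\omega_0=\frac{1}{2}\sum_j d(|w_j|^2)\wedge d(\arg w_j)$ gives at once
\begin{equation*}
\Psi^*\omega_0=\frac{1}{2}\sum_{j=1}^n d\bigl(u_j\Phi_{u_j}\bigr)\wedge d\theta_j=\omega_\Phi
\end{equation*}
on the open dense set where polar coordinates are defined, hence everywhere by continuity. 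Non-degeneracy of $\omega_\Phi$ then forces $d\Psi$ to be injective, so $\Psi$ is a local symplectic diffeomorphism.

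To upgrade $\Psi$ to a global symplectomorphism, observe that $|\Psi(z)|^2=\sum_j \Phi_{u_j}u_j$, so hypothesis~(\ref{gencondb}) says precisely that $|\Psi(z)|\to\infty$ as $|z|\to\infty$; in particular $\Psi$ is proper. A proper local diffeomorphism between equidimensional connected manifolds is a covering map, and since $\R^{2n}$ is simply connected this covering must be trivial, so $\Psi$ is a global diffeomorphism and therefore a symplectomorphism. I expect the main obstacle to be the algebraic rearrangement giving the action--angle form for $\omega_\Phi$ (together with verifying smoothness of $\Psi$ near points where some $\psi_j$ vanishes); once that canonical expression is in hand, the pull-back identity and the properness/covering conclusion are essentially formal.
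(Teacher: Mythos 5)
Your proof is correct, but it takes a genuinely different route from the paper's. The paper (summarizing \cite{LoiZuddas08}) computes $\Psi^*\omega_0=\frac{i}{2}\sum_j d\Psi_j\wedge d\bar\Psi_j$ directly in complex coordinates, expanding $d(\psi_jz_j)$ via the chain rule and matching coefficients with $\frac{i}{2}\partial\bar\partial\Phi$; you instead pass to action--angle coordinates $(u_j,\theta_j)$ and show that both $\omega_\Phi$ and $\omega_0$ take the canonical form $\frac12\sum_j d(u_j\Phi_{u_j})\wedge d\theta_j$, resp.\ $\frac12\sum_j du_j\wedge d\theta_j$, so that the pull-back identity reduces to the observation that $\Psi$ fixes the angles and sends $u_j$ to $u_j\Phi_{u_j}$. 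Your computation is correct (I checked the cancellation of the $du_j\wedge du_k$ and $d\theta_j\wedge d\theta_k$ terms and the regrouping of the mixed terms), and it has the merit of handling transparently the off-diagonal contributions coming from $\Phi_{u_ju_k}$ with $j\neq k$ --- precisely the terms that the paper's abridged computation, which differentiates $\Psi_j$ only with respect to $z_j,\bar z_j$, glosses over; so in the genuinely rotation-invariant (non-product) case your argument is actually the more complete one. The endgame is the same in both: nondegeneracy gives a local diffeomorphism, and condition (\ref{gencondb}) gives properness; you make explicit the ``proper local diffeomorphism onto a simply connected manifold is a trivial covering'' step that the paper leaves implicit. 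The two loose ends you flag are harmless: smoothness of $\psi_j=\sqrt{\partial\Phi/\partial|z_j|^2}$ is guaranteed because positivity of the K\"ahler metric forces $\frac{\partial}{\partial u_j}(u_j\Phi_{u_j})>0$, and since $u_j\Phi_{u_j}$ vanishes at $u_j=0$ this yields $\Phi_{u_j}>0$ everywhere (so the inequality in (\ref{cond0}) is automatically strict); and the extension of the identity $\Psi^*\omega_0=\omega_\Phi$ across the coordinate hyperplanes $\{z_j=0\}$ is immediate since both sides are smooth forms agreeing on a dense open set.
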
   
\begin{proof}
Assume condition (\ref{cond0}) holds true. Let us prove first that $F^*\omega_0=\omega$. We have:
\begin{equation}
\begin{split}
\Psi^*\omega_0=&\frac{i}{2}\sum_{j=1}^n d\Psi_j\wedge d\bar \Psi_j\\
=&\sum_{j=1}^n \left(\frac{\de \Psi_j}{\de z_j}dz_j+\frac{\de \Psi_j}{\de\bar z_j}d\bar z_j\right)\wedge \left(\frac{\de \bar \Psi_j}{\de z_j}dz_j+\frac{\de \bar \Psi_j}{\de\bar z_j}d\bar z_j\right)\\
=&\sum_{j,k=1}^n\left( \left|\frac{\de \Psi_j}{\de z_j}\right|^2- \left|\frac{\de\bar  \Psi_j}{\de z_j}\right|^2\right)dz_j\wedge d\bar z_j
\end{split}\nonumber
\end{equation}
Since
$$ \frac{\de \Psi_j}{\de z_j}= \frac{\de \psi_j}{\de z_j}z_j+\psi_j,\qquad \frac{\de \Psi_j}{\de \bar z_j}= \frac{\de \psi_j}{\de \bar z_j} z_j,$$
and
$$ \frac{\de \psi_j}{\de z_j}=\frac12 \psi_j^{-1} \left(\frac{\de^2 \Phi}{\de |z_j|^4} \right)\bar z_j,$$
it follows:
\begin{equation}
\begin{split}
\Psi^*\omega_0=&\sum_{j=1}^n\left( \left|\frac{\de \psi_j}{\de z_j}z_j+\psi_j\right|^2-\left|\frac{\de \psi_j}{\de z_j}\right|^2|z_j|^2\right)dz_j\wedge d\bar z_j\\
=&\sum_{j=1}^n\left( \frac{\de \psi_j}{\de z_j}\psi_jz_j+\frac{\de \psi_j}{\de \bar z_j}\psi_j\bar z_j+\psi_j^2\right)dz_j\wedge d\bar z_j\\
=&\sum_{j=1}^n\left(\left(\frac{\de^2 \Phi}{\de |z_j|^4} \right)|z_j|^2 +\left(\frac{\de \Phi}{\de |z_j|^2} \right)\right)dz_j\wedge d\bar z_j\\
=&\sum_{j=1}^n \frac{\de^2 \Phi}{\de z_j\de\bar z_j}dz_j\wedge d\bar z_j.
\end{split}\nonumber
\end{equation}
Observe now that since $\omega$ and $\omega_0$ are non-degenerate, it follows by the inverse function theorem that $\Psi$ is a local diffeomorphism.
If in addition condition (\ref{gencondb}) holds true, then  $\Psi$ is a proper map and hence a global diffeomorphism.
\end{proof}
We are now in the position of proving Theorem \ref{soliton}.
\begin{proof}[Proof of Theorem \ref{soliton}]
Let $\Phi (z, \bar z)= u(t)$, where $u(t)$ is given by (\ref{zerocond}). Then for all $j=1,\dots, n$
\begin{equation}
\frac{\de\Phi}{\de |z_j|^2}=\frac{\de\Phi}{\de ||z||^2}=\frac{u'(\log(||z||^2)}{||z||^2},\nonumber
\end{equation}
which is greater than zero for all $||z||^2\neq 0$ by (\ref{greaterthanzero}), and evaluated at $||z||^2=0$ gives  the value $1$ by (\ref{zerocond}).
Notice now that by  the first of the limit conditions given in (\ref{limitcond}) it follows that condition (\ref{gencondb}) in Lemma \ref{fabio} holds true.
Therefore by Lemma \ref{fabio} the map:
\begin{equation}
\!\!\!\!\!\!\!\!\!\!\!\!\!\!\!\!\!F\!:(\C^n,g_{RS})\f(\R^{2n},g_0),\quad z=(z_1,\dots,z_n)\mapsto\sqrt{\frac{u'(\log(||z||^2)}{||z||^2}}(z_1,\dots,z_n),\nonumber
\end{equation}
is the desired   global  symplectomorphism.
\end{proof}

In  order to prove  Theorem \ref{cigar} we need the following lemma  which classifies all totally geodesic submanifolds of $(\C^n,\omega_{C,n})$ through the origin.
\begin{lem}\label{totsub}
 Let $S$ be a totally geodesic complex submanifold (of complex dimension $k$) of $(\C^n, \omega_{C,n})$. Then, up to unitary transformation of $\C^n$, $S=(\C^k, \omega_{C,k})$.
\end{lem}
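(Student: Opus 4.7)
The plan is to exploit the standard necessary condition that a totally geodesic submanifold has curvature-invariant tangent spaces at every one of its points, together with the very specific ``diagonal'' form \eqref{Rjjjj} of the curvature tensor of $\omega_{C,n}$, to pin down $V:=T_0S\subset\C^n$ and recover $S$ as $\exp_0(V)$.

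At any $p\in S$ the invariance $R_p(X,\bar Y)Z\in T_pS$, combined with the product-Cigar curvature formula, reduces to
\[
\sum_{i=1}^{n}\frac{X^i\bar Y^i Z^i}{(1+|p_i|^2)^{2}}\,\partial_i\in T_pS,\qquad X,Y,Z\in T_pS.
\]
Setting $p=0$ gives the algebraic condition $(X^i\bar Y^i Z^i)_{i=1}^n\in V$ for all $X,Y,Z\in V$. Because $S$ is complete, $\exp_0(t\xi)\in S$ for every $\xi\in V$, and since parallel transport in a product of Cigars acts factorwise and preserves the complex structure along radial geodesics, the tangent space of $S$ at $\exp_0(t\xi)$ is canonically identified with $V$. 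Applying the invariance at these points and expanding $(1+|p_i|^2)^{-2}$ as a power series in $t$, one extracts the hierarchy
\[
\bigl(X^i\bar Y^i Z^i\,|\xi_1^i|^2\cdots|\xi_m^i|^2\bigr)_i\in V,\qquad X,Y,Z,\xi_j\in V,\ m\ge 0.
\]

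Specializing to $X=Y=Z=\xi_1=\cdots=\xi_m$ gives $(X^i|X^i|^{2m})_i\in V$ for every $X\in V$ and every $m\ge 0$. A Vandermonde argument in the distinct values of $s_i=|X^i|^2$ then forces the restriction of each $X\in V$ to every level set $\{i:|X^i|^2=\mathrm{const}\}$ to itself belong to $V$. Running this over all $X\in V$ and combining with the polarized cubic identities above produces a single partition $\{1,\dots,n\}=I_0\sqcup I_1\sqcup\cdots\sqcup I_k$ together with a basis $v_1,\dots,v_k$ of $V$ such that $v_a$ is supported on $I_a$ and has constant modulus on $I_a$. The isometry group of $(\C^n,\omega_{C,n})$ contains the semidirect product $U(1)^n\rtimes S_n$: using an element of $U(1)^n$ one aligns all phases in each $v_a$, using $S_n$ one brings the blocks into consecutive order, and a final block-diagonal normalisation identifies $V$ with the image of the canonical partial-diagonal embedding $\C^k\hookrightarrow\C^n$. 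Since the Cigar-product exponential map decomposes factorwise and preserves complex lines through the origin, $S=\exp_0(V)$ coincides with this embedded $\C^k$, and the induced K\"ahler form is, up to the unitary identification, the product $\omega_{C,k}$.

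The main obstacle is the algebraic step that produces a \emph{single} partition of $\{1,\dots,n\}$ compatible with every element of $V$ simultaneously: the Vandermonde argument yields a partition that \emph{a priori} depends on $X$, and one must use the polarized multilinear identities--and hence the curvature invariance at points other than $0$, guaranteed by completeness--to show that all these partitions are mutually compatible.
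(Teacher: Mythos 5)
Your strategy is genuinely different from the paper's. The paper works extrinsically with a parametrized holomorphic curve $f=(f_1,f_2)$ in $(\C^2,\omega_{C,2})$, imposes equality of the intrinsic and ambient holomorphic sectional curvatures, reduces this to the vanishing of an explicit expression $A(f_1,f_2)$, solves the resulting ODE to get $f_1=\alpha f_2$ with $|\alpha|=1$ (or one component constant), and then treats general $n$ by projecting $S$ onto the $\C$- and $\C^2$-factors. Your route --- curvature invariance of the tangent spaces, reduction to a linear-algebra problem about subspaces $V\subset\C^n$ closed under the coordinatewise triple product $(X,Y,Z)\mapsto(X^i\bar Y^i Z^i)_i$, and recovery of $S$ as $\exp_0(V)$ --- is viable and handles all $n$ and $k$ at once, avoiding both the paper's ``long but straightforward computation'' and its projection step. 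Two smaller inaccuracies to fix: parallel transport along $t\mapsto\exp_0(t\xi)$ identifies $T_pS$ not with $V$ but with $D_pV$, $D_p=\mathrm{diag}\bigl(\sqrt{1+|p_i|^2}\bigr)$, so the hierarchy must be derived after conjugating by $D_p$ (it still comes out as you state once the diagonal factors are absorbed); and, exactly as in the paper's own wording, the metric induced on a diagonal block of size greater than one is a \emph{rescaled} Cigar, so the conclusion ``$S=(\C^k,\omega_{C,k})$ up to unitary'' should be read as a statement about the image set, which is all Theorem \ref{cigar} needs.

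The genuine gap is the one you flag yourself: you never actually produce the single partition $I_1\sqcup\dots\sqcup I_k$, and without that step the proof is incomplete. It can be closed, and in fact using only the condition at the origin; the identities at $p\neq 0$ are not needed. From $(X^i|X^i|^{2m})_i\in V$ and Vandermonde you already know that for each $X\in V$ and each level set $L$ of $i\mapsto|X^i|$ on $\mathrm{supp}(X)$ the restriction $X|_L$ lies in $V$; normalizing, $e:=c^{-1}X|_L\in V$ satisfies $|e^i|\in\{0,1\}$. Now feed $e$ back into the cubic identity with $X=Y=e$: $(e^i\bar e^iZ^i)_i=Z|_{\mathrm{supp}(e)}\in V$ for every $Z\in V$. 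Hence $V$ is closed under restriction to the support of any such $e$, so the family $\mathcal{I}$ of these supports is closed under intersection and relative complement; its minimal elements are pairwise disjoint, every $\mathrm{supp}(X)$, $X\in V$, is a disjoint union of them, and on a minimal $I_a$ every nonzero element of $V\cap\C^{I_a}$ has constant nonzero modulus (its level sets lie in $\mathcal{I}$ and are contained in $I_a$, hence equal $I_a$), which forces $\dim(V\cap\C^{I_a})=1$: a nontrivial combination of two independent such elements would vanish at one index of $I_a$ without vanishing identically. Since $Z\mapsto Z|_{I_a}$ preserves $V$, we get $V=\bigoplus_a(V\cap\C^{I_a})$, which is precisely your normal form with $k$ blocks. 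With this inserted, your argument goes through.
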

\begin{proof}
Let us first prove the statement for $n=2$. For $k=0,2$ there is nothing to prove, thus fix $k=1$. 
Let 
$$f\!:(S,\tilde \omega)\hookrightarrow(\C^2,\omega_{C,2}),\quad f(z)=(f_1(z),f_2(z)).$$
be a totally geodesic embedding of a $1$-dimensional complex manifold 
$(S,\tilde \omega)$ into  $(\C^2,\omega_{C,2})$.
 By  $\tilde \omega=f^*(\omega_{C,2})$ we get:
\begin{equation}\label{hprod}
\tilde \omega=\frac i2\left(\left|\frac{\de f_1}{\de z}\right|^2 \frac{1}{1+|f_1(z)|^2} +\left|\frac{\de f_2}{\de z}\right|^2 \frac{1}{1+|f_2(z)|^2}\right)dz\wedge d\bar z.
\end{equation}
Let $\tilde R$, $ R_C$ be the curvature tensor of $(S,\tilde \omega)$ and $(\C^2,\omega_C)$ respectively. Since $(S,\tilde \omega)$ is totally geodesic in $(\C^2, \omega_C)$ we have
$$\tilde R(X,JX,X,JX)= R_C(X,JX,X,JX)$$
for all the vector fields $X$ on $S$ (see e.g. \cite[p. 176]{KobayashiNomizu2}).
Taking $X=\de/\de z$, we have:
\begin{equation}\label{computeR}
\tilde R\left(\frac{\de}{\de z},\frac{\de}{\de \bar z},\frac{\de}{\de z},\frac{\de}{\de \bar z}\right)=-\frac{\de^2\tilde g}{\de z\de\bar z}+\tilde g^{-1}(z)\left|\frac{\de \tilde g(z)}{\de z}\right|^2,\nonumber
\end{equation}
where $\tilde g$ is the K\"ahler metric associated to $\tilde \omega$, i.e. 
$$\tilde g=\left|\frac{\de f_1}{\de z}\right|^2 \frac{1}{1+|f_1(z)|^2} +\left|\frac{\de f_2}{\de z}\right|^2 \frac{1}{1+|f_2(z)|^2}.$$
Further, since the vector field $\frac\de{\de z}$ corresponds through $\mathrm{d}f$ to $\frac{\de f_1}{\de z}\frac{\de}{\de z_1}+\frac{\de f_2}{\de z}\frac{\de}{\de z_2}$, by (\ref{Rjjjj}) we get:
\begin{equation}
 R_C\left(\frac{\de}{\de z},\frac{\de}{\de \bar z},\frac{\de}{\de z},\frac{\de}{\de \bar z}\right)=\left|\frac{\de f_1}{\de z}\right|^4 \frac{1}{(1+|f_1(z)|^2)^3}+\left|\frac{\de f_2}{\de z}\right|^4 \frac{1}{(1+|f_2(z)|^2)^3}.\nonumber
\end{equation}
Since
\begin{equation}
\frac{\de \tilde g}{\de z}=\sum_{j=1}^2 \left( \frac{2}{1+|f_j(z)|^2}\overline{\frac{\de f_j}{\de z}} \frac{\de^2 f_j}{\de z^2}- \left|\frac{\de f_j}{\de z}\right|^2\frac{\bar f_j}{(1+|f_j|^2)^2}\frac{\de f_j}{\de z}\right) ,\nonumber
\end{equation}
\begin{equation}
\begin{split}
\frac{\de^2 \tilde g}{\de z\,\de\bar z}=&\sum_{j=1}^2\left[\left|\frac{\de f_j}{\de z}\right|^4\frac{2|f_j|^2}{(1+|f_j(z)|^2)^3} +\left|\frac{\de^2 f_j}{\de z^2}\right|^2\frac{1}{1+|f_j(z)|^2} +\right.\\
&\left.-\frac{1}{(1+|f_j|^2)^2}\left(\bar f_j\left(\frac{\de f_j}{\de z}\right)^2\overline{\frac{\de^2 f_j}{\de z^2}}+\left|\frac{\de f_j}{\de z}\right|^4+f_j\left(\overline{\frac{\de f_j}{\de z}}\right)^2\frac{\de^2 f_j}{\de z^2}\right)\right]
\end{split}\nonumber
\end{equation}
after a long but straightforward computation, we get that $\tilde R\left(\frac{\de}{\de z},\frac{\de}{\de \bar z},\frac{\de}{\de z},\frac{\de}{\de \bar z}\right)- R_C\left(\frac{\de}{\de z},\frac{\de}{\de \bar z},\frac{\de}{\de z},\frac{\de}{\de \bar z}\right)$ assumes the form: $$\frac{-|A(f_1,f_2)|^2}{\left(\left|\frac{\de f_1}{\de z}\right|^2(1+|f_2|^2)+\left|\frac{\de f_2}{\de z}\right|^2(1+|f_1|^2)\right)(1+|f_1|^2)^2(1+|f_2|^2)^2},$$
where
\begin{equation}
\begin{split}
A(f_1,f_2)=&\left(\frac{\de^2 f_2}{\de z^2}\frac{\de f_1}{\de z}-\frac{\de^2 f_1}{\de z^2}\frac{\de f_2}{\de z}\right)(1+|f_1|^2)(1+|f_2|^2)+\\
&+\left(\frac{\de f_1}{\de z}\right)^2\frac{\de f_2}{\de z}\bar f_1(1+|f_2|^2)-\left(\frac{\de f_2}{\de z}\right)^2\frac{\de f_1}{\de z}\bar f_2(1+|f_1|^2).
\end{split}\nonumber
\end{equation}
Thus, $\tilde R\left(\frac{\de}{\de z},\frac{\de}{\de \bar z},\frac{\de}{\de z},\frac{\de}{\de \bar z}\right)- R_C\left(\frac{\de}{\de z},\frac{\de}{\de \bar z},\frac{\de}{\de z},\frac{\de}{\de \bar z}\right)=0$ iff $A(f_1,f_2
)=0$, i.e. iff
\begin{equation}\label{rmenor}
\begin{split}
&\frac{\de f_1}{\de z}(1+|f_2|^2)\left(\frac{\de^2 f_2}{\de z^2}(1+|f_1|^2)+\frac{\de f_1}{\de z}\frac{\de f_2}{\de z}\bar f_1\right)=\\
=&\frac{\de f_2}{\de z}(1+|f_1|^2)\left(\frac{\de^2 f_1}{\de z^2}(1+|f_2|^2)+\frac{\de f_2}{\de z}\frac{\de f_1}{\de z}\bar f_2\right),
\end{split}
\end{equation}
which is verified whenever one between $f_1(z)$ and $f_2(z)$ is constant (and thus zero since we assume $f(0,0)=0$), or when $f_1(z)=f_2(z)$.
In order to prove that these are the only solutions, write (\ref{rmenor}) as
\begin{equation}
\frac{\de f_1}{\de z}(1+|f_2|^2)\frac{\de}{\de z}\left(\frac{\de f_2}{\de z}(1+|f_1|^2)\right)=
\frac{\de f_2}{\de z}(1+|f_1|^2)\frac{\de}{\de z}\left(\frac{\de f_1}{\de z}(1+|f_2|^2)\right).\nonumber
\end{equation}
Assuming $f_1$, $f_2$ not constant, it leads to the equation:
\begin{equation}
\left( \frac{\frac{\de f_1}{\de z}(1+|f_2|^2)}{\frac{\de f_2}{\de z}(1+|f_1|^2)}\right)' \left(\frac{\de f_2}{\de z}(1+|f_1|^2)\right)^2=0,\nonumber
\end{equation}
which implies that for some complex constant $\lambda\neq 0$,
\begin{equation}\label{consta}
 \frac{\de f_1}{\de z}(1+|f_2|^2) =\lambda\frac{\de f_2}{\de z}(1+|f_1|^2),
\end{equation}
that is:
\begin{equation}
 \frac{\de \log f_1}{\de z}\bar f_1 =\lambda\frac{\de \log f_2}{\de z}\bar f_2.\nonumber
\end{equation}
Comparing the antiholomorphic parts we get $\bar f_1=\alpha \bar f_2$, for some complex constant $\alpha$. Substituting in (\ref{consta}) we get:
\begin{equation}
 \alpha(1+|f_2|^2) =\lambda(1+|\alpha|^2|f_2|^2).\nonumber
\end{equation}
Since $f(0,0)=0$, from this last equality follows $\alpha=\lambda$ and thus immediately $|\alpha|^2=1$. We have been proven that a totally geodesic submanifold of $(\C^2,\omega_{C,2})$ is, up to unitary transformation of $\C^2$, $(\C,\omega_C)$ realized either via the map $z\mapsto (f_1,0)$ (or equivalently $z\mapsto (0,f_1)$) or via $z\mapsto (f_1(z),\alpha f_1(z))$, with $|\alpha|^2=1$.

Assume now $S$ to be a $k$-dimensional complete totally geodesic complex submanifold of $(\C^n,\omega_{C,n})$ and let $\pi_j$, $j=1,\dots, n$, be the projection into the $j$th $\C$-factor in $\C^n$, $\pi_{jk}$ $j$, $k=1,\dots, n$, the projection into the space $\C^2$ corresponding to the $j$th and $k$th $\C$-factors.  Since $\pi_j(S)$, $j=1,\dots, n$, is totally geodesic into $(\C, \omega_C)$, it is either a point or the whole $\C$. Thus, up to unitary transformation of the ambient space, we can assume $S$ to be of the form:
\begin{equation}\label{esse}
(z_1,\dots,z_k)\mapsto(0,\dots, 0, h_{11}(z_1),\dots, h_{1r}(z_1),\dots, h_{k1}(z_k),\dots,h_{ks}(z_k)).
\end{equation}
Since also the projections $\pi_{jk}(S)$ have to be totally geodesic into $(\C^2,\omega_{C,2})$, by what we have proven for $n=2$, we can reduce (\ref{esse}) into the form:
\begin{equation}
(z_1,\dots,z_k)\mapsto(0,\dots, 0, h_{1}(z_1),\dots, \alpha_r h_{1}(z_1),\dots, h_{k}(z_k),\dots,\alpha_sh_{k}(z_k)),\nonumber
\end{equation}
where $|\alpha_t|^2=1$ for all $t$ appearing above. Thus, either $S=(\C^k, \omega_{C,k})$ or $S$ is a $k$ dimensional diagonal, which with a suitable unitary transformation can be written again as $(\C^k, \omega_{C,k})$, and we are done.
\end{proof}

\begin{proof}[Proof of Theorem \ref{cigar}]
The existence of a global symplectomorphism $\Psi_{C,n}\!:(\C^n,\omega_{C,n})\f(\R^{2n},\omega_0)$ is guaranteed  again by Lemma \ref{fabio}.
In fact for all $j=1,\dots, n$
\begin{equation}
\begin{split}
\frac{\de}{\de |z_j|^2}\Phi_{C,n}=&2\frac{\de}{\de |z_j|^2}\sum_{j=1}^n\int_0^{|z_j|}\frac{\log(1+s^2)}{s}ds\\
=&\frac{1}{|z_j|}\frac{\mathrm{d}}{\mathrm{d}|z_j|}\int_0^{|z_j|}\frac{\log(1+s^2)}{s}ds=\frac{\log(1+|z_j|^2)}{|z_j|^2}>0.
\end{split}\nonumber
\end{equation}
Moreover, condition (\ref{gencondb}) in Lemma \ref{fabio} is fullfilled by:
\begin{equation}
\lim_{z\f+\infty} |z_j|^2\sum_{j=1}^n\frac{\de\Phi_{C,n}}{\de |z_j|^2}=\lim_{z\f+\infty}\sum_{j=1}^n\log(1+|z_j|^2)=+\infty.\nonumber
\end{equation}
Thus by Lemma \ref{fabio} the map:
\begin{equation}\label{symplectomorphism}
\Psi_{C,n}\!: (\C^n, \omega_{C,n})\f(\R^{2n},\omega_0),\ \  z=(z_1,\dots,z_n)\mapsto (\psi_1(z_1)z_1,\dots, \psi_n(z_n)z_n),\nonumber
\end{equation}
with $$\psi_j=\sqrt{\frac{\log(1+|z_j|^2)}{|z_j|^2}},$$
is a global symplectomorphism. 

In order to prove the second part of the theorem, let $S$ be a $k$ dimensional totally geodesic complex submanifold of $(\C^n,\omega_{C,n})$ through the origin, which by Lemma \ref{totsub} is given by $(\C^k,\omega_{C,k})$. The image
$\Psi_{C,n} (S)$ is of the form:
$$\left(\sqrt{\frac{\log(1+|z_1|^2)}{|z_1|^2}}z_1,\dots, \sqrt{\frac{\log(1+|z_k|^2)}{|z_k|^2}}z_k,0,\dots, 0\right)\simeq \C^k,$$
concluding the proof.
\end{proof}

\end{document}